\newtheorem{theorem}{Theorem}
\newtheorem{lemma}[theorem]{Lemma}
\newtheorem{corollary}[theorem]{Corollary}
\theoremstyle{definition}
\newtheorem{example}{Example}
\theoremstyle{remark}
\newcommand{\D}{\mathbb{D}}
\newcommand{\N}{\mathbb{N}}
\newcommand{\C}{\mathbb{C}}
\renewcommand{\phi}{\varphi}
\newcommand{\MD}{\mathcal{D}}
\newcommand{\A}{\mathcal{A}}
\begin{document}

\title[Uniform separation]{Uniform separation through\\ intermediate points}
\thanks{The first author is supported by the Academy of Finland \#258125, and the second author is supported in part by the grants
MTM2011-24606 and 2014SGR 75.}

\author{Janne Gr\"ohn}
\address{Department of Physics and Mathematics, University of Eastern Finland\\ \indent P.O. Box 111, FI-80101 Joensuu, Finland}
\email{janne.grohn@uef.fi}

\author{Artur Nicolau}
\address{Departament de Matem\`atiques, Universitat Aut\`onoma de Barcelona, 08193\\ \indent Bellaterra, Barcelona, Spain}
\email{artur@mat.uab.cat }

\date{\today}

\subjclass[2010]{Primary 34C10}
\keywords{Carleson measure; linear differential equation; oscillation theory; uniform separation.}

\begin{abstract}
It is shown that a separated sequence of points in the unit disc of the complex plane
is in fact uniformly separated, if there exists a~certain intermediate sequence
whose separated subsequences are uniformly separated. This property is applied
to improve a recent result in the theory of differential equations.
\end{abstract}

\maketitle


\section{Introduction and main result}

Let $\D$ denote the open unit disc of the complex plane $\C$.
If $z,w\in\D$, then
$\varrho(z,w)=|z-w|/|1-\overline{z}{w}|$ is the pseudo-hyperbolic distance between
these points. Recall that the sequence $\{z_n\}_{n=1}^\infty\subset \D$
is called uniformly separated if
\begin{equation*}
  \inf_{k\in\N} \, \prod_{n\in\N\setminus\{k\}} \varrho(z_n,z_k)>0,
\end{equation*}
while $\{z_n\}_{n=1}^\infty\subset \D$ is said to be separated (in the pseudo-hyperbolic metric) if
there exists a~constant $\delta=\delta(\{z_n\}_{n=1}^\infty)$ with $0<\delta<1$ such that
\begin{equation} \label{eq:separated}
  \varrho(z_n,z_k) >\delta,  \quad n,k\in\N, \quad n\neq k.
\end{equation}
In this paper, separation always refers to the separation
with respect to the pseudo-hyperbolic metric.

If $z,w\in\D$ are two distinct points, then we define $\langle z,w \rangle \subset \D$ to be 
the hyperbolic segment joining $z$ and~$w$. That is,  $\langle z,w \rangle$ is a closed subarc of 
the unique hyperbolic geodesic which goes through $z\in\D$ and $w\in\D$.

The following result shows that a separated sequence of points
is in fact uniformly separated if there exists a sufficiently dispersed intermediate sequence.
In Section~\ref{sec:application} we consider an application of 
Theorem~\ref{thm:geodesics} in which the existence of the~intermediate sequence is natural.


\begin{theorem} \label{thm:geodesics}
Let $\{z_n\}_{n=1}^\infty$ be a separated sequence of points in $\D$. Suppose that there exists 
a sequence $\Lambda \subset \D$ satisfying the following properties:
\begin{enumerate}
\item[\rm (i)]
for each pair of distinct points $z_j,z_k\in \{z_n\}_{n=1}^\infty$ there corresponds a point $\xi_{j,k} \in \Lambda$ 
such that $\xi_{j,k} \in \langle z_j,z_k \rangle$;
\item[\rm (ii)]
each separated subsequence of $\Lambda$ is uniformly separated.
\end{enumerate}
Then, $\{z_n\}_{n=1}^\infty$ is uniformly separated.
\end{theorem}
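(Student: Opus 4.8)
The plan is to recast uniform separation as the summability condition $\sup_{k}\sum_{n\neq k}\bigl(1-\varrho(z_n,z_k)\bigr)<\infty$ (equivalent to $\inf_k\prod_{n\neq k}\varrho(z_n,z_k)>0$, since $-\log\varrho\asymp 1-\varrho$ on $(\delta,1)$), and then to transport this sum to the intermediate points. The geometric engine is the additivity of the hyperbolic distance $\beta=\operatorname{arctanh}\varrho$ along geodesics: for $\xi\in\langle z_n,z_k\rangle$ the addition formula for $\tanh$ gives the exact identity
\[
1-\varrho(z_n,z_k)=\frac{\bigl(1-\varrho(z_n,\xi)\bigr)\bigl(1-\varrho(\xi,z_k)\bigr)}{1+\varrho(z_n,\xi)\varrho(\xi,z_k)},
\]
so that, using (i) together with $1+\varrho(z_n,\xi_{n,k})\varrho(\xi_{n,k},z_k)\in[1,2]$,
\[
1-\varrho(z_n,z_k)\asymp\bigl(1-\varrho(z_n,\xi_{n,k})\bigr)\bigl(1-\varrho(\xi_{n,k},z_k)\bigr).
\]
It therefore suffices to bound $\sum_{n\neq k}\bigl(1-\varrho(z_n,\xi_{n,k})\bigr)\bigl(1-\varrho(\xi_{n,k},z_k)\bigr)$ uniformly in $k$.

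Fix $k$ and a small $\e\in(0,\delta)$, and let $\{\eta_i\}\subset\Lambda$ be a maximal $\e$-separated subset of $\{\xi_{n,k}:n\neq k\}$; assign to each $n$ an index $i(n)$ with $\varrho(\xi_{n,k},\eta_{i(n)})\le\e$. Since $1-\varrho(\cdot,z_k)$ changes by a factor depending only on $\e$ under an $\e$-perturbation, grouping the sum according to $i(n)$ yields
\[
\sum_{n\neq k}\bigl(1-\varrho(z_n,z_k)\bigr)\lesssim_{\e}\sum_i\bigl(1-\varrho(\eta_i,z_k)\bigr)\sum_{n:\,i(n)=i}\bigl(1-\varrho(z_n,\eta_i)\bigr).
\]
By (ii) the separated sequence $\{\eta_i\}$ is uniformly separated, hence generates a Carleson measure; a dyadic decomposition of $\D$ in the quantity $\sabs{1-\overline{\eta_i}w}$ then produces the balayage bound $\sup_{w\in\D}\sum_i\bigl(1-\varrho(\eta_i,w)\bigr)\le M$, with $M$ governed by the Carleson norm. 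In particular the outer factor $\sum_i\bigl(1-\varrho(\eta_i,z_k)\bigr)$ is bounded, provided this norm can be controlled independently of~$k$.

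The heart of the matter is to bound each inner sum $\sum_{n:\,i(n)=i}\bigl(1-\varrho(z_n,\eta_i)\bigr)$ by a constant independent of $i$ and $k$; this is the only place the separation of $\{z_n\}$ is used quantitatively. If $i(n)=i$, then $\eta_i$ lies within pseudo-hyperbolic distance $\e$ of $\langle z_n,z_k\rangle$, so $z_n$ sits on the far side of $\eta_i$ inside the hyperbolic cone of geodesics issuing from $z_k$ that meet the ball $\{\varrho(\cdot,\eta_i)\le\e\}$. Geodesics agreeing to within $\e$ at $\eta_i$ diverge at most exponentially, so the portion of this cone lying in the annulus $\{s\le\beta(\cdot,\eta_i)<s+1\}$ has hyperbolic area $\lesssim_{\e}e^{s}$; as the $z_n$ are $\delta$-separated, it contains $\lesssim_{\e,\delta}e^{s}$ of them, each contributing $1-\varrho(z_n,\eta_i)\asymp e^{-2s}$. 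Summing the geometric series $\sum_s e^{-s}$ gives $\sum_{n:\,i(n)=i}\bigl(1-\varrho(z_n,\eta_i)\bigr)\lesssim_{\e,\delta}1$. Combined with the previous display this bounds $\sum_{n\neq k}\bigl(1-\varrho(z_n,z_k)\bigr)$ uniformly in $k$, which is exactly uniform separation.

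What remains, and what I expect to be the main obstacle, is the uniform control over $k$ of the Carleson norm of $\{\eta_i\}$: namely the lemma that (ii) in fact furnishes a single $\gamma(\e)>0$ for which every $\e$-separated subsequence of $\Lambda$ is $\gamma(\e)$-uniformly separated. I would establish this by contradiction: a family of $\e$-separated subsequences whose separation constants tend to $0$ must exhibit, at some point of each, an internal balayage tending to $\infty$, already realized on a finite sub-configuration; placing these finite configurations in pairwise far-apart parts of $\D$ assembles a single $\e$-separated subsequence of $\Lambda$ that fails to be uniformly separated, contradicting (ii). The delicate book-keeping is to secure the far-apart placement—the troublesome case being configurations that cluster in the interior of $\D$—while keeping the union separated and preserving the blow-up.
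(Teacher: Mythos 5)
Your reduction of uniform separation to the balayage bound $\sup_k\sum_{n\neq k}\bigl(1-\varrho(z_n,z_k)\bigr)<\infty$, the factorization of $1-\varrho(z_n,z_k)$ through the intermediate point, and the grouping around a maximal $\e$-separated subset $\{\eta_i\}$ are all correct, as is (modulo the uniformity issue you flag) the bound on the outer sum. The fatal gap is the inner-sum bound $\sum_{n:\,i(n)=i}\bigl(1-\varrho(z_n,\eta_i)\bigr)\lesssim_{\e,\delta}1$, which is false; your counting argument mixes two normalizations of the hyperbolic metric. If $s=\beta(\cdot,\eta_i)$ with $\beta=\operatorname{arctanh}\varrho$, so that indeed $1-\varrho\asymp e^{-2s}$, then the annulus $\{s\le\beta(\cdot,\eta_i)<s+1\}$ has hyperbolic area $\asymp e^{2s}$, not $e^{s}$, and the cone of geodesics from $z_k$ through the ball $\{\varrho(\cdot,\eta_i)\le\e\}$ meets it in a set of area $\asymp\e\,e^{2s}$: the exponential divergence of geodesics that you invoke makes the cone fill a \emph{constant proportion} of every annulus, not an exponentially small one. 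Hence the per-annulus contribution is $\asymp\e\,e^{2s}\cdot e^{-2s}=\Theta(\e)$, and the series diverges. Concretely, in the upper half-plane model take $\eta_i=i$ and $z_k$ far up the imaginary axis, so the relevant geodesics are essentially the vertical lines meeting $\{|\Real z|\le\e\}$ and the cone is the strip $\{|\Real z|\le\e,\;0<\Imag z<1\}$; placing $\asymp\e 2^{j}/\delta$ points with horizontal spacing $\delta 2^{-j}$ at each height $2^{-j}$ gives a $\delta$-separated configuration, every point of which lies on a geodesic through $z_k$ passing within $C\e$ of $i$, while level $j$ contributes $\asymp\e/\delta$ to $\sum\bigl(1-\varrho(\cdot,i)\bigr)$, so the inner sum is infinite.

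This is not a locally repairable estimate — it is exactly where the content of the theorem sits. Your scheme only ever uses intermediate points on geodesics ending at the fixed point $z_k$, and for such data the strip configuration above is admissible: the corresponding $\xi_{n,k}$ may all cluster in a single bounded hyperbolic ball, where hypothesis (ii) imposes no constraint. Excluding it requires hypotheses (i)--(ii) applied to pairs $(z_n,z_m)$ with $n,m\neq k$, and that is precisely how the paper proceeds: it uses the Carleson-square characterization $\sum_{z_n\in Q}(1-|z_n|)\le C\,\ell(Q)$ and, inside each square, joins \emph{consecutive points of the sequence to each other} within dyadic annuli, harvesting intermediate points generation by generation; Lemma~\ref{lemma:partition} and Lemma~\ref{lemma:number} then show the harvested set is a union of two separated subsets of $\Lambda\cap 4Q$, to which (ii) is applied. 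Finally, the secondary obstacle you identify (a single constant $\gamma(\e)$ for all $\e$-separated subsequences) is real but fixable — not by "placing" the bad finite configurations, which sit wherever they sit inside $\Lambda$, but by pruning each new configuration of the finitely many points lying within $\e/2$ of the previously selected ones and taking the union; that repair, however, does not touch the main error.
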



\section{Auxiliary results}

The set 
\begin{equation*} 
Q = Q(I) = \big\{ re^{i\theta}  : e^{i\theta}\in I, \, 1-|I|\leq r < 1\big\}
\end{equation*}
is called a Carleson square based on the arc $I\subset \partial\D$, where $|I| = \ell(Q)$
denotes the normalized arc length of $I$ (i.e.,~$|I|$ is the Euclidean arc length of $I$ divided by $2\pi$). 
The top part of a Carleson square $Q(I)$ is 
\begin{equation*}
  T\big( Q(I) \big) = \big\{ re^{i\theta}  : e^{i\theta}\in I, \, 1-|I|\leq r <  1- |I|/2 \big\}.
\end{equation*}
For $0<K<\infty$, $K Q$ denotes the Carleson square whose base is concentric to that of $Q$, and 
for which $\ell(KQ) = K\,  \ell(Q)$.

A finite positive measure $\mu$ in $\D$ is called a Carleson measure, if there exists
a constant $0<M<\infty$ such that
\begin{equation*}
\int_{\D} |f(z)| \, d\mu(z) \leq M  \left( \sup_{0<r<1} \, \frac{1}{2\pi}  \int_0^{2\pi} |f(re^{i\theta})| \, d\theta \right)
\end{equation*}
for any analytic function $f$ in the unit disc.
Carleson proved that this holds if and only if there exists a constant $0<C<\infty$ such that
$\mu(Q) \leq C \, \ell(Q)$ for any Carleson square $Q$. It is also well-known that
a sequence $\{ z_n \}_{n=1}^\infty$ of points in $\D$ is uniformly separated if and only if
it is separated and there exists a constant $0<C<\infty$ such that 
\begin{equation} \label{eq:precarleson}
\sum_{z_n \in Q} (1-|z_n|) \leq C\, \ell(Q)
\end{equation}
for any Carleson square $Q$. For more details, we refer to \cite{G:2007}.


\begin{lemma} \label{lemma:geodesics}
Let $Q(I)$ be a Carleson square for which $0<|I|<1/2$. If $\langle z_1,z_2 \rangle$ is the hyperbolic segment
joining any points $z_1,z_2\in Q(I)$, then 
\begin{equation} \label{eq:lemmaclaim}
\langle z_1,z_2 \rangle \subset  \big\{ re^{i\theta}  : e^{i\theta}\in I, \, 1-\sqrt{1+\pi^2} \cdot |I| \leq r < 1\big\}.
\end{equation}
\end{lemma}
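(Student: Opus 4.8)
The plan is to work in normalized coordinates and to exploit the elementary equation of a hyperbolic geodesic as a circle orthogonal to $\partial\D$. After a rotation I may assume that the base $I$ of $Q(I)$ is centered at $\theta=0$, so that $Q(I)$ is contained in the sector $\{re^{i\theta}:|\theta|\le \pi|I|,\ 0\le r<1\}$ and every point of $Q(I)$ has modulus at least $1-|I|$; here the half-width $\pi|I|$ comes from the normalization $|I|=(\text{arc length})/2\pi$, and $0<|I|<1/2$ guarantees $\pi|I|<\pi/2$. Recall that a hyperbolic geodesic is an arc of a circle meeting $\partial\D$ orthogonally; if this circle has centre $Re^{i\gamma}$ and radius $\rho$, then orthogonality gives $R^2=1+\rho^2$. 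Writing a point of the arc as $re^{i\theta}$ and expanding $|re^{i\theta}-Re^{i\gamma}|^2=\rho^2$ yields the key relation $\cos(\theta-\gamma)=\frac{r^2+1}{2rR}$, while the point of the arc nearest the origin has argument $\gamma$ and modulus $r_{\min}=R-\rho$.

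First I would settle the angular containment. Since $r\mapsto\tfrac12(r+1/r)$ is decreasing on $(0,1)$, the key relation forces $|\theta-\gamma|$ to increase with $r$, so the argument moves monotonically away from $\gamma$ as one travels along the geodesic away from its deepest point. Consequently, along $\langle z_1,z_2\rangle$ the argument never leaves the interval with endpoints $\Arg z_1,\Arg z_2$, both of which lie in $[-\pi|I|,\pi|I|]$; hence $e^{i\theta}\in I$ for every point of the segment.

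For the radial estimate I distinguish two cases. If the deepest point of the underlying geodesic (the origin, when the geodesic is a diameter) does not lie on $\langle z_1,z_2\rangle$, then the smallest modulus on the segment is attained at $z_1$ or $z_2$ and is at least $1-|I|\ge 1-\sqrt{1+\pi^2}\,|I|$. Otherwise the deepest point $w^\ast=r_{\min}e^{i\gamma}$ lies on the segment, so $\gamma$ lies between $\Arg z_1$ and $\Arg z_2$; since the two angular gaps sum to at most $2\pi|I|$, one endpoint $z_\ast=r_\ast e^{i\theta_\ast}$ satisfies $\delta_\ast:=|\theta_\ast-\gamma|\le \pi|I|$ together with $r_\ast\ge 1-|I|$. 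Evaluating the key relation at $w^\ast$ and at $z_\ast$ and using the identity $1-\frac{2r}{r^2+1}=\frac{(1-r)^2}{1+r^2}$, I obtain $(1-r_{\min})^2=\frac{1+r_{\min}^2}{1+r_\ast^2}\,|e^{i\gamma}-z_\ast|^2\le|e^{i\gamma}-z_\ast|^2$, the last step because $r_{\min}\le r_\ast$.

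Finally I would expand $|e^{i\gamma}-z_\ast|^2=(1-r_\ast)^2+2r_\ast\bigl(1-\cos\delta_\ast\bigr)$ and bound the two summands separately: $(1-r_\ast)^2\le|I|^2$ and $2r_\ast(1-\cos\delta_\ast)\le\delta_\ast^2\le\pi^2|I|^2$, using $r_\ast\le 1$ and $1-\cos\delta_\ast\le\delta_\ast^2/2$. This gives $(1-r_{\min})^2\le(1+\pi^2)|I|^2$, i.e. $r_{\min}\ge 1-\sqrt{1+\pi^2}\,|I|$, which together with the angular containment is exactly \eqref{eq:lemmaclaim}. The main obstacle is obtaining the sharp Pythagorean constant $\sqrt{1+\pi^2}$ rather than the wasteful $1+\pi$ that a naive triangle inequality would produce; the decisive points are to compare the deepest point to a single, well-chosen endpoint through the exact identity above, and to split the resulting squared distance into a purely radial part (bounded by $|I|^2$) and a purely angular part (bounded by $\pi^2|I|^2$).
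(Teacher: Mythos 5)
Your proof is correct, and it follows a genuinely different route from the paper's. Both arguments represent the geodesic as an arc of a circle orthogonal to $\partial\D$, but the paper then reduces to an extremal configuration: it asserts (without detailed justification) that the Euclidean distance from $\langle z_1,z_2\rangle$ to the origin is smallest when $z_1,z_2$ are the interior corners $(1-\ell)e^{\pm i\pi\ell}$ of $Q(I)$, computes the deepest point $X_0(\ell)=x_0(\ell)-\sqrt{x_0(\ell)^2-1}$ of that particular geodesic explicitly, and extracts the constant from the evaluation $\sup_{0<\ell<1/2}\,(1-X_0(\ell))/\ell=\sqrt{1+\pi^2}$; the angular containment is dismissed as ``clear by hyperbolic geometry.'' You instead prove a direct estimate valid for arbitrary endpoints: your identity $(1-r_{\min})^2=\tfrac{1+r_{\min}^2}{1+r_*^2}\,|e^{i\gamma}-z_*|^2$ checks out (both sides reduce, via $2r_{\min}R=1+r_{\min}^2$ and $2r_*R\cos\delta_*=1+r_*^2$, to $(1+r_{\min}^2)(1-1/R)$ and $(1+r_*^2)(1-1/R)$ respectively), and the split of $|e^{i\gamma}-z_*|^2$ into a radial part $\le|I|^2$ and an angular part $\le\pi^2|I|^2$ delivers the same Pythagorean constant with no extremality claim and no supremum computation; likewise your monotonicity argument, from $\cos(\theta-\gamma)=(r+1/r)/(2R)$ being decreasing in $r$, supplies the proof of $\Pi(\langle z_1,z_2\rangle)\subset I$ that the paper omits. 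The trade-off: the paper's computation is shorter and exhibits the constant as sharp (attained in the limit $\ell\to0$), while yours is self-contained and rigorous at exactly the two places where the paper is terse. One detail deserves an explicit sentence rather than a parenthesis: when the geodesic is a diameter, the relation $\cos(\theta-\gamma)=(r^2+1)/(2rR)$ degenerates; in that case the origin cannot lie on $\langle z_1,z_2\rangle$ (all points of $Q(I)$ have argument in $(-\pi/2,\pi/2)$ after your rotation), so the argument is constant and the modulus monotone along the segment, and both the angular and the radial containment are immediate.
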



\begin{proof}
Without loss of generality, we may assume that $Q(I)$ is based on an~arc 
which is centered at $z=1$. Denote $\ell=|I|$. The hyperbolic segment 
$\langle z_1,z_2 \rangle$ is either a straight line segment or an~arc of a circle
$(x-x_0)^2+(y-y_0)^2=r_0^2$, which is orthogonal  to the unit circle.
By the orthogonality, $x_0^2+y_0^2=1+r_0^2$.

Let $\Pi:\D\setminus \{0\} \to \partial\D$ be the radial projection $z \mapsto z/|z|$.
By hyperbolic geometry it is clear that $\Pi(\langle z_1,z_2 \rangle) \subset I$.
The Euclidean distance between $\langle z_1,z_2 \rangle$ and the origin is as small 
as possible, if $z_1 = ( 1- \ell ) \exp( i \pi \ell )$ and $z_2 = ( 1- \ell ) \exp( - i \pi \ell )$
are the interior corners of $Q(I)$. Then $y_0=0$, and 
\begin{equation*}
\big( ( 1- \ell )  \cos( \pi \ell ) - x_0 \big)^2 + 
\big( ( 1- \ell  )  \sin( \pi \ell )  \big)^2
= r_0^2 = x_0^2 -1,
\end{equation*}
which implies
\begin{equation*}
x_0 = x_0(\ell) = \frac{1+( 1- \ell )^2}{2( 1- \ell) \cos( \pi \ell)}.
\end{equation*}
Now, $\langle z_1,z_2 \rangle$ intersects the real axis at $X_0  = X_0(\ell) = x_0(\ell) - \sqrt{x_0(\ell)^2-1}$,
which is precisely the point on $\langle z_1,z_2 \rangle$ which is closest to the origin.
Since
\begin{equation*}
\sup_{0<\ell <1/2} \, \frac{1-X_0(\ell)}{\ell}  = \sqrt{1 + \pi^2} \approx 3.30,
\end{equation*}%
the inclusion \eqref{eq:lemmaclaim} follows. 
\end{proof}

The next lemma introduces a partition of arcs, which plays a significant role
in our construction.


\begin{lemma} \label{lemma:partition}
Let $I\subset \partial\D$ be a closed arc for which $0<|I|<1/8$. Suppose that $0<\varepsilon<1$, 
and let $\{\xi_k\}_{k=1}^K\subset \D$  be a finite collection of points 
such that $\varrho(\xi_j,\xi_k)>\varepsilon$ for any $j\neq k$.
Suppose that $0\leq r < 1$ is sufficiently large to satisfy $\max\, \{ |\xi_k| \}_{k=1}^K \leq r$ and $1-r \leq |I|$.
Then, there exist a constant $\eta=\eta(\varepsilon)$ with $0<\eta<1$ and a~partition $I=\bigcup_{n=1}^N I_n$, 
which divides $I$ into $N\leq 8K+8$ closed subarcs (having pairwise disjoint interiors) such  that 
\begin{enumerate}
\item[\rm (i)]
$|I_n| \geq (1-r)/64$;

\item[\rm (ii)]
any hyperbolic segment $\gamma$, which joins two points in $Q(I_n)$, 
satisfies $\varrho(\gamma, \{\xi_k\}_{k=1}^K) > \eta$;
\end{enumerate}
for all $n=1,\dotsc,N$.
\end{lemma}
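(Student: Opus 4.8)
The plan is to reduce condition (ii) to a purely geometric statement about how the points $\{\xi_k\}_{k=1}^K$ sit relative to the Carleson squares $Q(I_n)$, and then to build the partition by placing short arcs directly beneath the points and long arcs over the gaps between them.

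First I would invoke Lemma~\ref{lemma:geodesics}: since each $|I_n|$ will be $<1/2$, every hyperbolic segment joining two points of $Q(I_n)$ lies in the box $\widehat{Q}(I_n)=\big\{\,re^{i\theta}:e^{i\theta}\in I_n,\ 1-\sqrt{1+\pi^2}\,|I_n|\le r<1\,\big\}$. Hence (ii) follows once we guarantee $\varrho(\xi_k,\widehat{Q}(I_n))>\eta$ for every $k$ and every $n$. The merit of this reformulation is a standard pseudo-hyperbolic estimate in the half-plane model: the $\eta$-neighbourhood of $\xi_k$ is comparable to a Euclidean disc of radius $\sim\eta\,s_k$ about $\xi_k$, where $s_k=1-|\xi_k|\ge 1-r$. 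Thus a box that is much deeper than $s_k$ is harmless for $\xi_k$ as soon as the projection of $\xi_k$ lies at angular distance $\gtrsim s_k$ from $I_n$, while a box much shallower than $s_k$ is harmless whenever $\xi_k$ lies a definite proportion below it; the delicate regime is $|I_n|\asymp s_k$ with $\xi_k$ near $\widehat{Q}(I_n)$.

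Second, to each point I would attach its shadow $S_k\subset\partial\D$, the closed arc centred at the projection of $\xi_k$ and of length comparable to $s_k$. The construction is then: over each component of $I\setminus\bigcup_kS_k$ use arcs as long as the gap allows, and inside the shadows use arcs of length comparable to the smallest $s_k$ active there. The estimate of the first step is exactly what prevents a dyadic cascade toward the points: because the $\eta$-ball of a shallow point has size only $\eta s_k$, a long arc may be pushed up against a shadow boundary and still keep its box at pseudo-hyperbolic distance $\approx 1/2$ from the point, so a single long arc, rather than infinitely many Whitney pieces, suffices on each gap. When several points share essentially the same projection (a radial stack), the hypothesis $\varrho(\xi_j,\xi_k)>\varepsilon$ forces consecutive moduli to differ by a factor bounded away from $1$; taking the arc over the stack at the shallowest scale then leaves every deeper point of the stack at pseudo-hyperbolic distance $\gtrsim\varepsilon$ below its box, which is precisely where the dependence $\eta=\eta(\varepsilon)$ enters.

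Third, I would check (i) and the bound $N\le 8K+8$. Condition (i) is arranged by never subdividing below scale $(1-r)/64$ and by merging residual short arcs; this is consistent with safety, since any box whose length is a small enough multiple of $1-r$ reaches only to radius $>r$, hence lies entirely above every $\xi_k$ and is automatically at pseudo-hyperbolic distance bounded below by an absolute constant from all of them. For the count I would charge each final arc either to a component of $I\setminus\bigcup_kS_k$ or to a shadow $S_k$, using the separation to bound by an absolute constant both the number of arcs meeting a fixed shadow and the number of distinct shadow endpoints; this yields the factor $8$ (traceable to $\sqrt{1+\pi^2}<8$ and to the doubling of endpoints) and the additive $8$ accounts for the endpoints of $I$. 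The main obstacle is the combinatorics of overlapping shadows at different scales: a deep point has a wide shadow that may swallow the narrow shadows of many shallower points, and one must organise the partition so that each long arc really avoids every point lurking beneath it while the total arc count stays proportional to $K$ rather than growing like $\log\big(|I|/(1-r)\big)$. Turning the heuristic that shallow points have small balls into a rigorous no-cascade statement, and making the pseudo-hyperbolic estimate quantitative enough to produce an explicit $\eta(\varepsilon)$, is where the real work lies.
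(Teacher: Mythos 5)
Your opening reduction is sound and is in fact the same as the paper's starting point: by Lemma~\ref{lemma:geodesics} every hyperbolic segment joining points of $Q(I_n)$ stays in an enlarged box over $I_n$, and since the pseudo-hyperbolic ball of radius $\eta$ about $\xi_k$ is comparable to a Euclidean disc of radius $\asymp\eta(1-|\xi_k|)$, condition (ii) follows once each enlarged box avoids the pairwise disjoint Euclidean discs $\overline{D}_k$ of radius $\mu(\varepsilon)(1-|\xi_k|)$ centered at the $\xi_k$ (this is exactly the paper's ``auxiliary claim''). The genuine gap is in the construction, at precisely the point you yourself flag as ``the main obstacle'' and then defer: your prescription does not yield $N\le 8K+8$, and you give no argument that any version of it does. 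Concretely, take $K=2$: a deep point $\xi_1$ with $s_1=1-|\xi_1|\asymp|I|$ and, under its wide shadow but not radially aligned with it, one shallow point $\xi_2$ with $s_2\ll s_1$. Then ``arcs of length comparable to the smallest $s_k$ active there'' covering a shadow of width $\asymp s_1$ produce $\asymp s_1/s_2$ arcs for two points, so $N$ depends on the ratio of scales, not on $K$. Your stack remark has a quantitative flaw as well: radial separation only forces consecutive moduli to satisfy $s_{j+1}/s_j\ge(1+\varepsilon)/(1-\varepsilon)$, a factor tending to $1$ as $\varepsilon\to0$, so an arc over a stack ``at the shallowest scale'' with a careless constant has a box of depth $\approx 4s_{\min}$ containing several deeper stack points (pseudo-hyperbolic distance $0$, not $\gtrsim\varepsilon$); the correct statement is that the arc must be short enough, of length $<(1-\mu)s_{\min}/4$, for its box to stay above the top of \emph{every} disc $\overline{D}_k$ stacked beneath it, which yields separation $\gtrsim\mu(\varepsilon)$.

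The counting problem you leave open is the actual content of the lemma, and the paper resolves it with a different organization: a left-to-right greedy sweep instead of a point-centered shadow decomposition. Starting from the left endpoint of $I$, one takes the shortest arc $[a_M,a_{M+1}]$ whose full Carleson square meets $\bigcup_k\overline{D}_k$; by this minimality the interior of that square avoids all the discs, and splitting the arc into four equal subarcs places each quarter's $4$-times-enlarged box inside that square, so the auxiliary claim applies to every quarter at once, with no comparison of scales ever needed. Two endpoint cases are treated separately so that (i) holds with the constant $1/64$, and the count falls out by a charging argument: every sweep square except possibly the last two meets some disc, while each disc can meet at most two sweep squares, so there are $J\le 2K+2$ sweep steps and $N\le 4J\le 8K+8$ (your guess that the $8$ comes from $\sqrt{1+\pi^2}$ is not where it comes from; it is $4$ quarters times $2$ squares per disc). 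The sweep automatically prevents the multi-scale cascade because an arc is terminated the instant its square reaches \emph{any} disc, deep or shallow; if you tried to repair your shadow construction by placing one sufficiently short arc over each projection and filling the gaps, proving that the gaps need only $O(1)$ arcs each would essentially reproduce this charging argument.
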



\begin{proof}

Since $\{\xi_k\}_{k=1}^K\subset \D$ is separated,
there exists a constant $\mu=\mu(\varepsilon)$ with $0<\mu<1/2$ such that the Euclidean discs
$\overline{D}_k = \{ z \in \C : |z - \xi_k| \leq \mu(1-|\xi_k|) \}$ are pairwise disjoint for $k=1,\dotsc,K$.
We write $\MD = \bigcup_{k=1}^K \overline{D}_k$ for short,
and state a property which follows from Lemma~\ref{lemma:geodesics}. This property will be
referred as the \emph{auxiliary claim}: There exists a constant $\eta=\eta(\varepsilon)$ with $0<\eta<1$ such that,
if $I^\star$ is a subarc of $I$ for which the interior of
$\{r e^{i\theta} : e^{i\theta}\in I^\star, \, 1 -  4\, |I^\star| \leq r < 1\}$ does not
intersect $\MD$, then any hyperbolic segment $\gamma$ joining two points in the Carleson square $Q(I^\star)$ satisfies
$\varrho(\gamma,\{\xi_k\}_{k=1}^K)\geq \eta$.

Denote $I=[a,b]$, where $0<b-a<2\pi$. That is, the interval $[a,b]$ is identified with 
the arc~$I= \{ e^{i\theta} \in \partial\D : a\leq \theta \leq b\}$.
If $I^\star$ is a subarc of $I$, then the collection of subarcs $\{ I_1, \dotsc,I_M\}$ of $I^\star$ with pairwise disjoint
interiors is called an admissible partition of $I^\star$ provided that $I^\star=\bigcup_{n=1}^{M} I_n$,
and the conditions~(i) and (ii) are fulfilled. 

If $Q( [a,b]) \cap \MD = \emptyset$, then we split $I$ into four subarcs $I_1,\dotsc,I_4$ of equal length.
The collection $\{ I_1, \dotsc,I_4\}$ is an admissible partition of~$I$ by the auxiliary claim.
Otherwise $Q( [a,b]) \cap \MD \neq \emptyset$, and we let $a_1$ to be the smallest value such that $a<a_1\leq b$ and 
\begin{equation} \label{eq:notempty}
  Q([a,a_1]) \cap \MD \neq \emptyset.
\end{equation}
We split $[a,a_1]$ into four subarcs $I_1,\dotsc,I_4$
of equal length. By \eqref{eq:notempty}, there is a~point $z_1\in Q([a,a_1]) \cap \MD$. Then
$a_1-a \geq 2\pi (1-|z_1|) \geq 2\pi(1-\mu) \, (1-r)$, which implies
\begin{equation*}
  |I_n| = \frac{1}{4} \cdot \frac{a_1-a}{2\pi} \geq \frac{1-\mu}{4} \, (1-r), \quad n=1,\dotsc,4,
\end{equation*} 
and hence the arcs $I_1, \dotsc, I_4$ satisfy (i). The arcs $I_1, \dotsc, I_4$ satisfy (ii) by the auxiliary claim.

We continue inductively. Suppose that we have a sequence $\{ a_j \}_{j=1}^M\subset [a,b]$, $M\in\N$,
which determines an admissible partition $\{I_1, \dotsc, I_{4M}\}$ of $[a,a_M]$.
There are four possible cases:
\begin{enumerate}

\item[\rm (I)]
If $a_M=b$, then we stop the inductive process.

\item[\rm (II)]
If $a_M<b$ and $Q( [a_M,b]) \cap \MD \neq \emptyset$, then let  $a_{M+1}$ be the smallest value such that 
$a_M<a_{M+1}\leq b$ and  
\begin{equation} \label{eq:notempty2}
  Q( [a_M,a_{M+1}] ) \cap \MD \neq \emptyset. 
\end{equation}
We split $[a_M,a_{M+1}]$ into four subarcs $I_{4M+1},\dotsc, I_{4(M+1)}$ of equal length.
According to \eqref{eq:notempty2}, there exists a point $z_{M+1}\in Q([a_M,a_{M+1}]) \cap \MD$. Then
$a_{M+1}-a_M \geq 2\pi (1-|z_{M+1}|) \geq 2\pi (1-\mu) \, (1-r)$, which implies
\begin{equation*}
  |I_{4M+n}| = \frac{1}{4} \cdot \frac{a_{M+1}-a_M}{2\pi} \geq \frac{1-\mu}{4} \, (1-r), \quad n=1,\dotsc,4,
\end{equation*} 
and hence the arcs $I_{4M+1}, \dotsc, I_{4(M+1)}$ satisfy (i). 
Moreover, the property (ii) holds by the auxiliary claim.
In conclusion, we have a sequence $\{ a_j \}_{j=1}^{M+1}$, which determines 
an admissible partition $\{I_1, \dotsc, I_{4(M+1)}\}$ of $[a,a_{M+1}]$. We proceed with the induction.

\item[\rm (III)]
If  $a_M \leq b-2 \pi (1-\mu) (1-r)/8$ and $Q( [a_M,b]) \cap \MD = \emptyset$, then define $a_{M+1}=b$
and split $[a_M,b]$ into four subarcs $I_{4M+1},\dotsc, I_{4(M+1)}$ of equal length. We have
\begin{equation*}
|I_{4M+n}| = \frac{1}{4} \cdot \frac{b-a_M}{2\pi} \geq \frac{1-\mu}{32} \, (1-r), \quad n=1,\dotsc,4,
\end{equation*} 
and hence the arcs $I_{4M+1}, \dotsc, I_{4(M+1)}$ satisfy (i).
The property (ii) holds by the auxiliary claim.
In conclusion, we have a sequence $\{ a_j \}_{j=1}^{M+1}$, which determines 
an admissible partition $\{ I_1, \dotsc, I_{4(M+1)}\}$ of $[a,b]$.  We stop the inductive process.

\item[\rm (IV)]
If  $ b-2 \pi (1-\mu) (1-r)/8 < a_M < b$ and $Q( [a_M,b]) \cap \MD = \emptyset$, 
then define 
\begin{equation*}
\qquad \quad a_M^\star = b-2\pi \, \frac{1-\mu}{8} \,  (1-r), \quad I_{4M}^\star = \left[ a_{M-1}+3 \, \frac{a_{M}-a_{M-1}}{4},\, a_M^\star \right].
\end{equation*}
Since $|I_{4M}| \geq (1-\mu) (1-r)/4$, we have
\begin{align*}
\qquad \quad \big| I_{4M}^\star \big| 
  = \big| I_{4M} \big| - \frac{a_M - a_M^\star}{2\pi}
  \geq \big| I_{4M} \big| - \frac{1-\mu}{8} \, (1-r)
 \geq  \frac{1-\mu}{8} \, (1-r).
\end{align*}
We conclude that $I_{4M}^\star$ satisfies (i).
It is clear that~$I_{4M}^\star$ satisfies the property  (ii), since it is a subarc of $I_{4M}$. 

Define $a_{M+1}= b$ and $I_{4M+1} = [a_M^\star,b]$. The arc $I_{4M+1}$ satisfies the estimate~(i),
since $|I_{4M+1}| = (1-\mu) (1-r)/8$. Now
\begin{equation*}
\qquad \quad 1 - 4\,  |I_{4M+1}| 
 \geq   r + \mu  (1-r) \geq |\xi_k| + \mu\, (1-|\xi_k|), \quad k=1,\dotsc,K.
\end{equation*}
The property (ii) for $I_{4M+1}$ follows from the auxiliary claim. In conclusion, we have 
a sequence $\{ a_1, \dotsc, a_{M-1}, a_M^\star, a_{M+1} \}$, which determines 
an admissible partition $\{I_1, \dotsc, I_{4M-1}, I_{4M}^\star, I_{4M+1} \}$ of $[a,b]$.  We stop the inductive process.
\end{enumerate}

\medskip

The inductive process above produces a finite collection of points
$$a=a_0 < a_1 < \dotsb <a_{J-1} < a_J = b,$$
which determines an admissible partition $\{I_1, \dotsc, I_N\}$ of $I=[a,b]$.
Each arc $\A_j = [a_{j-1},a_j]$ for $j=1,\dotsc,J$ 
is partitioned into at most four subarcs $I_n$, and hence $N\leq 4J$. It suffices to bound
$J$ in terms of $K$ to complete the proof.

Without loss of generality, we may assume that $1 - |\xi_k| \leq 4 \,|I|$ for all $k=1,\dotsc,K$.
This is because any $\xi_k$, for which $1 - |\xi_k| > 4 \,|I|$, is separated from any 
hyperbolic segment joining two points in $Q(I)$ by Lemma~\ref{lemma:geodesics}.
First, note that each Euclidean disc $\overline{D}_k$ for fixed $1\leq k \leq K$ can meet at most 
two Carleson squares $Q(\A_j)$.
Second, we know that every $Q(\A_j)$ for $0\leq j \leq J-2$ meets some disc $\overline{D}_k$ by construction.
This gives $J-2 \leq 2K$, which implies $N \leq 4J \leq  8K+8$.
\end{proof}

The last auxiliary result shows that, if there are three hyperbolic segments
of certain type, then there is no point in their union which is
simultaneously (pseudo-hyperbolically) close to all of them.


\begin{lemma} \label{lemma:number}
  Let $Q$ be a Carleson square for which $0<\ell(Q)<1/4$. Let $\gamma_1$, $\gamma_2$ and $\gamma_3$ 
  be three hyperbolic segments connecting points in $Q$ such that the radial projections 
  $\Pi(\gamma_1)$, $\Pi(\gamma_2)$ and $\Pi(\gamma_3)$ have pairwise disjoint interiors, and
  the hyperbolic segments satisfy the geometric property
  \begin{equation*}
    \frac{1}{K} \, \max_{\xi \in\gamma_j} \big\{ 1- |\xi| \big\} 
    \leq \big| \Pi(\gamma_j) \big| 
    \leq K \, \max_{\xi \in\gamma_j} \big\{ 1- |\xi| \big\}, \quad j=1,2,3,
  \end{equation*}
  for some constant $1<K<\infty$. Then, there exists a constant $\mu=\mu(K)$ with $0<\mu<1$
  such that 
  \begin{equation*}
    \varrho(\xi,\gamma_1) + \varrho(\xi,\gamma_2) + \varrho(\xi,\gamma_3) \geq \mu, \quad \xi\in \gamma_1 \cup \gamma_2 \cup \gamma_3.
  \end{equation*}
\end{lemma}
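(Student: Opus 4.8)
The plan is to argue by contradiction, exploiting the principle that a point cannot be pseudo-hyperbolically close to a hyperbolic segment unless it sits at comparable depth and almost the same angle as some point of that segment; the disjointness of the radial projections, together with the geometric property, then leaves no room for three such segments. By symmetry it suffices to treat the case $\xi\in\gamma_1$, so that $\varrho(\xi,\gamma_1)=0$ and the claim reduces to a lower bound for $\varrho(\xi,\gamma_2)+\varrho(\xi,\gamma_3)$. Suppose, for a small constant $\mu=\mu(K)$ to be fixed at the end, that $\varrho(\xi,\gamma_2)<\mu$ and $\varrho(\xi,\gamma_3)<\mu$, and pick $\zeta_2\in\gamma_2$ and $\zeta_3\in\gamma_3$ with $\varrho(\xi,\zeta_j)<\mu$. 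Write $d=1-|\xi|$.

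The analytic input consists of two standard consequences of $\varrho(\xi,\zeta_j)<\mu$. First, the moduli are comparable: the estimate $\frac{1-\mu}{1+\mu}\leq \frac{1-|\zeta_j|}{1-|\xi|}\leq\frac{1+\mu}{1-\mu}$ gives $1-|\zeta_j|\geq \frac{1-\mu}{1+\mu}\,d$. Second, the radial projections are angularly close: there is an absolute constant $C$ with $|\Pi(\xi)-\Pi(\zeta_j)|\leq C\mu d$ in normalized arc length. The second estimate I would obtain by writing $|\xi-\zeta_j|=\varrho(\xi,\zeta_j)\,|1-\overline{\xi}\zeta_j|$, bounding $|1-\overline{\xi}\zeta_j|\lesssim d$ for points of depth $\asymp d$, and observing that $|\xi-\zeta_j|$ dominates the angular separation of $\Pi(\xi)$ and $\Pi(\zeta_j)$ from below. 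The hypothesis $0<\ell(Q)<1/4$, via Lemma~\ref{lemma:geodesics}, confines all three segments to a common arc of length below $1/4$, so these angular comparisons involve no wrap-around on $\partial\D$.

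Next I would combine these facts with the geometric property to bound the three arc lengths from below. Since $\xi\in\gamma_1$, the maximal depth of $\gamma_1$ is at least $d$, whence $|\Pi(\gamma_1)|\geq d/K$; and since $1-|\zeta_j|\geq\frac{1-\mu}{1+\mu}d$, the same property gives $|\Pi(\gamma_j)|\geq\frac{1-\mu}{1+\mu}\cdot\frac{d}{K}\geq \frac{d}{2K}$ for $j=2,3$, provided $\mu\leq 1/3$. Thus the three arcs $\Pi(\gamma_1),\Pi(\gamma_2),\Pi(\gamma_3)$ have pairwise disjoint interiors, each of normalized length at least $d/(2K)$, and each meets the arc $J$ centered at $\Pi(\xi)$ of length $2C\mu d$ (it contains $\Pi(\xi)$, $\Pi(\zeta_2)$, $\Pi(\zeta_3)$, respectively). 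I would then finish with an elementary packing observation: ordering the three arcs circularly, the middle one is trapped inside the gap between the outer two, and since both outer arcs meet $J$, that gap has length strictly less than $|J|$; hence the middle arc has length below $|J|=2C\mu d$. Choosing $\mu<1/(4CK)$ forces $d/(2K)>|J|$, a contradiction, so $\mu=\mu(K)$ is the desired constant.

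I expect the main obstacle to be the quantitative passage from $\varrho(\xi,\zeta_j)<\mu$ to the angular bound $|\Pi(\xi)-\Pi(\zeta_j)|\leq C\mu d$ with an \emph{absolute} constant $C$: one must verify that, for points of depth $\asymp d$, pseudo-hyperbolic closeness scales the admissible angular deviation by exactly the depth, which is precisely what lets a fixed threshold $\mu$ defeat the fixed lower bound $d/(2K)$ on the arc lengths. Once this correct scaling is in place, the geometric property and the packing step are routine.
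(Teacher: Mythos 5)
Your proposal is correct, but it takes a genuinely different route from the paper. The paper normalizes so that $|\Pi(\gamma_1)|\leq\min\{|\Pi(\gamma_2)|,|\Pi(\gamma_3)|\}$ and then argues by cases on whether $\Pi(\gamma_2)$ and $\Pi(\gamma_3)$ lie on the same side of $\Pi(\gamma_1)$ or on opposite sides; in each case it inserts auxiliary radial segments between the relevant projections and argues qualitatively that $\gamma_1$ (or each half of $\gamma_1$) is separated from the radial barrier, hence from the segment lying beyond it, by a constant depending on $K$. You instead run a single quantitative contradiction: if some $\xi\in\gamma_1$ were $\mu$-close to both $\gamma_2$ and $\gamma_3$, then comparability of depths plus the angular estimate $|\Pi(\xi)-\Pi(\zeta_j)|\leq C\mu d$ force all three projections---which have pairwise disjoint interiors and, by the geometric hypothesis, length at least $d/(2K)$ each---to meet a single arc $J$ of length $2C\mu d$; the middle projection is then trapped in an interval of length at most $|J|$, which is impossible once $\mu<1/(4CK)$. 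Both proofs rest on the same underlying principle (pseudo-hyperbolic closeness forces comparable depth and angular closeness at the scale of that depth, while the hypothesis ties depth to projection length), and your two ``standard'' facts do hold with absolute constants here because Lemma~\ref{lemma:geodesics} keeps all segments inside the base arc and uniformly away from the origin, exactly as you note. What your packing formulation buys is uniformity: it treats every cyclic ordering and every location of $\xi$ at once, gives explicit constants, and sidesteps a point the paper leaves implicit---in the paper's second case the bound is only derived for $\xi\in\gamma_1$, and extending it to $\xi\in\gamma_2\cup\gamma_3$ requires an extra appeal to the weak triangle inequality for $\varrho$ (similarly for converting the first case's separation of the sets $\gamma_1$ and $\gamma_3$ into the stated pointwise sum bound). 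What the paper's route buys is brevity and a stronger intermediate conclusion in its first case, namely that the whole segments $\gamma_1$ and $\gamma_3$ are separated as sets, not merely that no single point is close to two of the three segments.
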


\begin{proof}
Without loss of generality, we may assume that the normalized lengths of the radial
projections satisfy $| \Pi(\gamma_1) | \leq \min \{ |\Pi(\gamma_2)|, |\Pi(\gamma_3)| \}$.
Since $\ell(Q)<1/4$, all hyperbolic segments connecting any two
points in $Q$ are uniformly bounded away from the origin by Lemma~\ref{lemma:geodesics}.

If $\Pi(\gamma_2)$ and $\Pi(\gamma_3)$ are on the same side of $\Pi(\gamma_1)$, then we may assume
that $\Pi(\gamma_2)$ is located in the middle of  $\Pi(\gamma_1)$ and $\Pi(\gamma_3)$. Let
$A = \{ r e^{i\theta(A)} : 0<r<1\}$ be a radial segment, where $e^{i\theta(A)}\in\partial\D$ 
lies strictly between the interiors of $\Pi(\gamma_2)$ and $\Pi(\gamma_3)$.
In this case $\gamma_1$ and $A$,
and hence also $\gamma_1$ and $\gamma_3$, are separated by a constant depending on $K$.

If $\Pi(\gamma_2)$ and $\Pi(\gamma_3)$ are on the opposite sides of $\Pi(\gamma_1)$, then
let $\arg{z}$ denote a continuous branch of the argument defined in $Q$.
We may assume that the hyperbolic segments satisfy the ordering $\gamma_2,\gamma_1,\gamma_3$
with respect to the increasing argument.
Let $B = \{ r e^{i\theta(B)} : 0<r<1\}$ be a~radial segment, where 
$e^{i\theta(B)}\in\partial\D$ lies strictly between the interiors of $\Pi(\gamma_1)$ and $\Pi(\gamma_3)$.
Moreover, let $C = \{ r e^{i\theta(C)} : 0<r<1\}$ be a radial segment, where $e^{i\theta(C)}\in\partial\D$ 
is the midpoint of $\Pi(\gamma_1)$. In this case
any point $\xi\in \gamma_1$ for which $\arg{\xi} \leq \theta(C)$ is 
separated from~$B$, and hence is also separated from~$\gamma_3$, by a constant depending on $K$. Analogously
any point $\xi\in \gamma_1$ for which $\arg{\xi} > \theta(C)$ is separated from $\gamma_2$ by a constant depending on $K$.
\end{proof}


\section{Proof of Theorem~\ref{thm:geodesics}}

Note that
$\{z_n\}_{n=1}^\infty$ can be divided into finitely many subsequences 
$\mathcal{Z}_1, \dotsc, \mathcal{Z}_M$ such that
for any $j=1,\dotsc, M$ the following two conditions hold: top part of any Carleson square 
contains at most one point of~$\mathcal{Z}_j$, and there exists an~integer $m=m(j) \in \{ 0,1, \dotsc, 6\}$ such that
\begin{equation*}
\mathcal{Z}_j \subset \bigcup_{k=0}^\infty \big\{ z\in \D : 2^{-m-7k} < 1-|z| \leq 2^{-m-(7k-1)} \big\}.
\end{equation*}
It is sufficient to prove that each subsequence $\mathcal{Z}_j$ is uniformly separated. Hence, 
without loss of generality we may assume that the following two conditions hold:
\begin{enumerate}
\item[\rm (A)]
$\{z_n\}_{n=1}^\infty$ satisfies \eqref{eq:separated} for $0<\delta<1$, where $\delta$ is
so large that the top part of each Carleson square contains
at most one point from $\{ z_n \}_{n=1}^\infty$;

\item[\rm (B)]
$\{z_n\}_{n=1}^\infty$ satisfies
\begin{equation*} 
\{ z_n \}_{n=1}^\infty \subset \bigcup_{k=1}^\infty \big\{ z\in\D : 2^{-7k} < 1-|z| \leq 2^{-(7k-1)}  \big\}.
\end{equation*}
\end{enumerate}

We proceed to show that there exists a constant $0<C<\infty$ such that
\eqref{eq:precarleson} holds for any Carleson square $Q$ for which $0<\ell(Q)<1/8$.
Let $Q$ be a such Carleson square, and let $\arg z$ be a continuous branch of the argument defined in $Q$. 
By means of an inductive argument, we divide $\{z_n\}_{n=1}^\infty$
into subsequences such that
\begin{equation*}
\{ z_n \}_{n=1}^\infty \cap Q = \bigcup_{j=1}^\infty \Big( M^{(j)} \cup S^{(j)} \Big),
\end{equation*}
where the subsequences $M^{(j)}$ and $S^{(j)}$ satisfy the following properties:
\begin{enumerate}


\item[\rm (a)]
Concerning $S^{(j)}$, we have 
\begin{align*} 
  \sum_{z_n\in S^{(1)}} (1-|z_n|) & \leq 4 \, \ell(Q),\\
  \sum_{z_n\in S^{(j)}} (1-|z_n|) & \leq \frac{1}{2}  \sum_{z_n\in M^{(j-1)}} (1- |z_n|), \qquad j>1.
\end{align*}


\item[\rm (b)]
Concerning $M^{(j)}$, we construct sequences $\Lambda_Q^{(j)}$ such that
\begin{equation*}
\Lambda_Q = \bigcup_{j=1}^\infty  \Lambda_Q^{(j)} \subset \big( \Lambda \cap 4Q \big)
\end{equation*}
can be represented as a union of two separated subsequences, and
\begin{equation} \label{eq:Mestimate}
\sum_{z_n\in M^{(j)}} (1-|z_n|) 
\leq 6 \, \sum_{\xi \in \Lambda_Q^{(j)}} \big( 1- |\xi| \big), \quad j\in\N.
\end{equation}
\end{enumerate}
It is possible that some of these subsequences are empty,
and in those cases the corresponding sums in (a) and (b) are zero by definition.
It is clear that the properties (a) and (b) imply
\begin{align*}
  \sum_{z_n\in Q} (1-|z_n|) 
   & \leq \sum_{z_n\in S^{(1)}} (1-|z_n|) + \frac{3}{2}\,  \sum_{j=1}^\infty \sum_{z_n\in M^{(j)}} (1-|z_n|) \\
  & \leq 4\, \ell(Q) + 9\,  \sum_{\xi \in \Lambda_Q} (1-|\xi|),
\end{align*}
which finishes the proof, since $\Lambda_Q$ can be represented as a~union of 
two uniformly separated sequences by (b) and the assumption (ii).

Let us now proceed with the construction of the subsequences $S^{(j)}$ and $M^{(j)}$.
Consider the dyadic annuli
\begin{equation*}
C_k = \big\{ z \in \D :  2^{-k} < 1- |z| \leq 2^{-(k-1)} \big\}, \quad k\in\N.
\end{equation*}


\subsection*{First step}

Let $Z^{(1)} =  \{z_n\}_{n=1}^\infty \cap Q$, and consider the arc $I^{(1)} = \overline{Q} \cap \partial\D$,
where~$\overline{Q}$ is the closure of the Carleson square $Q$.
Let $S^{(1)}$ be the (possibly empty) subsequence of those points $z_n \in Z^{(1)}$ for which
there exists a constant $k(z_n)\in\N$ such that $z_n$ is the only point of $Z^{(1)}$ belonging to $C_{k(z_n)}$.
Points in $S^{(1)}$ are called single points of the first generation. It is easy to see that
\begin{equation*} 
\sum_{z_n \in S^{(1)}} (1-|z_n|) = \sum_{k=1}^\infty \sum_{z_n \in S^{(1)} \cap C_k} (1-|z_n|) \leq 4 \, \ell(Q),
\end{equation*}
which proves (a) for $j=1$.

By construction any point in $Z^{(1)} \setminus S^{(1)}$ has
at least one partner in the same dyadic annulus. More precisely, 
\begin{equation} \label{eq:firstint}
\big( Z^{(1)} \setminus S^{(1)} \big) \cap C_k
\end{equation}
is either empty or contains at least two points for any $k\in\N$. If \eqref{eq:firstint} is empty for all $k\in\N$,
then define $M^{(1)} = \emptyset$, and move on to the second step of the inductive process.
Otherwise, define
\begin{equation*}
k(I^{(1)}) = \min \Big\{ k\in\N : \big( Z^{(1)} \setminus S^{(1)} \big) \cap C_k \neq \emptyset \Big\},
\end{equation*}
and write
\begin{equation*}
M^{(1)} = \big\{ z_n^{(1)} : n=1,\dotsc, N^{(1)} \big\} =  \big( Z^{(1)} \setminus S^{(1)} \big) \cap C_{k(I^{(1)})}.
\end{equation*}
Order the points in $M^{(1)}$ by increasing argument. Now
$\arg(z_{n}^{(1)}) < \arg(z_{n+1}^{(1)})$ for $n=1,\dotsc,N^{(1)} - 1$, where the inequality is strict 
by the reduction~(A). For the same values of $n$, let $\gamma_n^{(1)} = \langle z_{n}^{(1)}, z_{n+1}^{(1)}\rangle$ 
be the hyperbolic segments joining $z_{n}^{(1)}$ and $z_{n+1}^{(1)}$, and consider the points $\xi_{n}^{(1)}\in\gamma_{n}^{(1)} \cap \Lambda$
given by the assumption. 

It is clear that the subsequence
$\{ \xi_{2n-1}^{(1)} : n=1,\dotsc, \lfloor N^{(1)}/2 \rfloor \}$
is separated, where 
$\lfloor x \rfloor$ denotes the integer part of $x$.
We also point out that there exists a~constant $1<K<\infty$
such that the normalized lengths of the radial
projections of  the hyperbolic segments 
$\gamma_{2n-1}^{(1)} = \langle z_{2n-1}^{(1)}, z_{2n}^{(1)}\rangle$ satisfy
\begin{equation} \label{eq:nicearcs}
  \frac{1}{K} \, \max_{\xi\in \gamma_{2n-1}^{(1)}} \big\{ 1-|\xi| \big\} 
  \leq \big| \Pi\big(\gamma_{2n-1}^{(1)}\big) \big| 
  \leq K \max_{\xi\in \gamma_{2n-1}^{(1)}} \big\{ 1-|\xi| \big\}
\end{equation}
for all $n=1, \dotsc, \lfloor N^{(1)}/2 \rfloor$.

Since $\gamma_{2n-1}^{(1)}$ is a hyperbolic segment joining points in $C_{k(I^{(1)})}$, we have
\begin{equation} \label{eq:dist1}
1-\big|\xi_{2n-1}^{(1)}\big|> 2^{-k(I^{(1)})}, \quad n=1, \dotsc, \lfloor N^{(1)}/2 \rfloor.
\end{equation}
Hence,
\begin{equation} \label{eq:lower1}
\sum_{n=1}^{\lfloor N^{(1)}/2 \rfloor} \big( 1-\big| \xi_{2n-1}^{(1)} \big| \big)
  \geq 2^{-k(I^{(1)})} \bigg\lfloor \frac{N^{(1)}}{2} \bigg\rfloor 
  \geq \frac{1}{6} \, \sum_{z_n\in M^{(1)}} (1-|z_n|).
\end{equation}
Define $\Lambda_Q^{(1)} = \{ \xi_{2n-1}^{(1)} : n=1,\dotsc, \lfloor N^{(1)}/2 \rfloor\}$, 
which is known to be a  separated sequence. Now \eqref{eq:lower1} proves \eqref{eq:Mestimate} for $j=1$. 
In conclusion, we have proved the properties (a) and (b) for $j=1$.


\subsection*{Second step}
If 
\begin{equation} \label{eq:bsplit}
Z^{(1)} \setminus \big( S^{(1)} \cup M^{(1)} \big)
\end{equation}
is empty, then define $S^{(j)} = M^{(j)} = \emptyset$ for all $j\in\N\setminus \{1\}$, and stop the inductive process. 
Otherwise, we proceed to split \eqref{eq:bsplit} into subsequences, where the
number of subsequences is at most a constant multiple of $N^{(1)}$ (i.e., the number of elements in $M^{(1)}\neq \emptyset$).
We apply Lemma~\ref{lemma:partition} to $I^{(1)}$ and $\Lambda_Q^{(1)}$ for $r=1 - 2^{-k(I^{(1)})}$, see \eqref{eq:dist1}. 
Lemma~\ref{lemma:partition} produces a partition of $I^{(1)}$ into arcs $I^{(2)}_p$,
\begin{equation} \label{eq:newpart}
I^{(1)} = \bigcup_{p=1}^{P_2}  I_p^{(2)}, \quad P_2 \leq 8 \lfloor N^{(1)}/2 \rfloor + 8 \leq 8\,  N^{(1)},
\end{equation}
with pairwise disjoint interiors, where
\begin{equation*}
\big| I_p^{(2)} \big| \geq (1-r)/64 =  2^{-k(I^{(1)})}/64 , \quad p=1,\dotsc, P_2.
\end{equation*}
By the reduction (B), we obtain
\begin{equation*} 
Z^{(1)} \setminus \big( S^{(1)} \cup M^{(1)} \big) \subset \bigcup_{p=1}^{P_2} Q\big( I_p^{(2)} \big).
\end{equation*}

Define
\begin{equation*}
Z^{(2)}_p = Q(I_p^{(2)}) \cap \Big( Z^{(1)} \setminus \big( S^{(1)} \cup M^{(1)} \big) \Big), \quad p=1,\dotsc,P_2,
\end{equation*}
and note that $Z^{(1)} \setminus ( S^{(1)} \cup M^{(1)} ) = \bigcup_{p=1}^{P_2} Z_p^{(2)}$.
We proceed to repeat the first step for each $p=1,\dotsc, P_2$ with $Z^{(1)} \subset Q$ replaced by $Z^{(2)}_p \subset Q(I^{(2)}_p)$.

Fix $p=1,\dotsc, P_2$. If $Z^{(2)}_p=\emptyset$, then define $S^{(2)}_p = M^{(2)}_p=\emptyset$ and
turn to consider another value of $p$.
Otherwise $Z^{(2)}_p\neq \emptyset$, and then define $S^{(2)}_p$ to be the (possibly empty) subsequence 
of those points $z_n \in Z^{(2)}_p$ for which
there exists a constant $k(z_n)\in\N$ such that 
$z_n$ is the only point of $Z^{(2)}_p$ belonging to $C_{k(z_n)}$. Now
\begin{equation} \label{eq:singles2}
\sum_{z_n \in S^{(2)}_p} (1-|z_n|) = \sum_{k=1}^\infty \sum_{z_n \in S^{(2)}_p \cap C_k} (1-|z_n|) \leq \frac{1}{16} \, 2^{-k(I^{(1)})},
\end{equation}
since any point in $Z^{(2)}_p$ has modulus larger than $1 - 2^{-k(I^{(1)})}/64$.

By construction any point in $Z^{(2)}_p \setminus S^{(2)}_p$ has
at least one partner in the same dyadic annulus. That is, 
\begin{equation} \label{eq:int2}
\big( Z^{(2)}_p \setminus S^{(2)}_p \big) \cap C_k
\end{equation}
is either empty or contains at least two points for any $k\in\N$.
If \eqref{eq:int2} is empty for all $k\in\N$, then define $M^{(2)}_p=\emptyset$ and move on to consider
another value of~$p$. Otherwise, define
\begin{equation*}
k(I^{(2)}_p) = \min \Big\{ k\in\N : \big( Z^{(2)}_p \setminus S^{(2)}_p \big) \cap C_k \neq \emptyset \Big\},
\end{equation*}
and write
\begin{equation*}
M^{(2)}_p = \big\{ z_n^{(2,p)} : n=1, \dotsc, N_p^{(2)} \big\} =  \big( Z^{(2)}_p \setminus S^{(2)}_p \big) \cap C_{k(I^{(2)}_p)}.
\end{equation*}
Order the points in $M^{(2)}_p$ by increasing argument. Consequently, we have
$\arg(z_{n}^{(2,p)}) < \arg(z_{n+1}^{(2,p)})$ for $n=1,\dotsc,N_p^{(2)} - 1$, where the inequality is strict 
by~(A). For the same values of $n$, let $\gamma_n^{(2,p)} = \langle z_{n}^{(2,p)}, z_{n+1}^{(2,p)}\rangle$ 
be the hyperbolic segments joining $z_{n}^{(2,p)}$ and $z_{n+1}^{(2,p)}$, and consider the points $\xi_{n}^{(2,p)}\in\gamma_{n}^{(2,p)} \cap \Lambda$
given by the assumption. 

It is clear that the subsequence 
$\{ \xi_{2n-1}^{(2,p)} : n=1,\dotsc, \lfloor N_p^{(2)}/2 \rfloor\}$
is separated. Consequently, Lemma~\ref{lemma:partition} guarantees that 
\begin{equation} \label{eq:sepap}
\big\{ \xi_{2n-1}^{(2,p)} : n=1,\dotsc, \lfloor N_p^{(2)}/2 \rfloor \big\} \cup \Lambda_Q^{(1)}
\end{equation}
is separated. Corresponding to the situation in \eqref{eq:nicearcs}, the normalized lengths of the radial
projections of the hyperbolic segments 
$\gamma_{2n-1}^{(2,p)} = \langle z_{2n-1}^{(2,p)}, z_{2n}^{(2,p)}\rangle$ satisfy
\begin{equation} \label{eq:nicearcs2}
  \frac{1}{K} \, \max_{\xi\in \gamma_{2n-1}^{(2,p)}} \big\{ 1-|\xi| \big\} 
  \leq \big| \Pi\big(\gamma_{2n-1}^{(2,p)}\big) \big| 
  \leq K \max_{\xi\in \gamma_{2n-1}^{(2,p)}} \big\{ 1-|\xi| \big\}
\end{equation}
for all $n=1, \dotsc, \lfloor N_p^{(2)}/2 \rfloor$. Here $K$ is the same constant as in \eqref{eq:nicearcs}.

Since $\gamma_{2n-1}^{(2,p)}$ are hyperbolic segments joining points in $C_{k(I^{(2)}_p)}$, we have
\begin{equation*} \label{eq:dist2}
1-\big|\xi_{2n-1}^{(2,p)}\big|> 2^{-k(I^{(2)}_p)}, \quad n=1, \dotsc, \lfloor N_p^{(2)}/2 \rfloor.
\end{equation*}
Hence, as in \eqref{eq:lower1},
\begin{equation} \label{eq:sumest22}
\sum_{n=1}^{\lfloor N_p^{(2)}/2 \rfloor} \big( 1-\big| \xi_{2n-1}^{(2,p)} \big| \big)
   \geq  2^{-k(I^{(2)}_p)} \, \bigg\lfloor \frac{N_p^{(2)}}{2} \bigg\rfloor
  \geq \frac{1}{6} \, \sum_{n=1}^{N_p^{(2)}} (1-|z_n^{(2,p)}|).
\end{equation}

Define
\begin{equation*}
S^{(2)} = \bigcup_{p=1}^{P_2} S^{(2)}_p, \quad 
M^{(2)} = \bigcup_{p=1}^{P_2} M^{(2)}_p, \quad 
\Lambda_Q^{(2)} = \bigcup_{p=1}^{P_2}  \big\{ \xi_{2n-1}^{(2,p)} : n=1, \dotsc, \lfloor N_p^{(2)}/2 \rfloor \big\},
\end{equation*}
where the points in $S^{(2)}$ are said to be singles of  the second generation.
Then, by means of \eqref{eq:newpart} and \eqref{eq:singles2},
\begin{equation*}
\sum_{z_n \in S^{(2)}} (1-|z_n|)
        = \sum_{p=1}^{P_2} \sum_{z_n \in S^{(2)}_p} (1-|z_n|)
         \leq \frac{1}{2}\,  N^{(1)}  2^{-k(I^{(1)})} 
        \leq \frac{1}{2}  \, \sum_{n=1}^{N^{(1)}} (1-|z_n^{(1)}|),
 \end{equation*}
which proves (a) for $j=2$. By \eqref{eq:sumest22}, we deduce
\begin{equation*}
  \sum_{z_n\in M^{(2)}} (1-|z_n|) 
    = \sum_{p=1}^{P_2} \sum_{n=1}^{N_p^{(2)}} (1-|z_n^{(2,p)}|)
   \leq 6 \, \sum_{p=1}^{P_2} \sum_{n=1}^{\lfloor N_p^{(2)}/2 \rfloor} \big( 1-\big| \xi_{2n-1}^{(2,p)} \big| \big),
 \end{equation*}
which proves \eqref{eq:Mestimate} for $j=2$. It remains to show that
$\Lambda_Q^{(1)} \cup \Lambda_Q^{(2)}$ is a~union of two
separated sequences. By \eqref{eq:sepap}, it suffices to know that
\begin{equation*} 
\big\{ \xi_{2n-1}^{(2,p_1)} : n=1,\dotsc, \lfloor N_{p_1}^{(2)}/2 \rfloor \big\} 
\, \cup \, \big\{ \xi_{2n-1}^{(2,p_2)} : n=1,\dotsc, \lfloor N_{p_2}^{(2)}/2 \rfloor \big\}
\end{equation*}
is a union of two separated sequences  for any  pair of indices $1 \leq p_1 < p_2 \leq P_2$.
This is guaranteed by~\eqref{eq:nicearcs2} and Lemma~\ref{lemma:number}. 
In conclusion, we have proved (a) and (b) for $j=2$. 

We continue inductively. In the next step we only need to apply Lemma~\ref{lemma:partition}
to those intervals $I^{(2)}_p$ for which $M^{(2)}_p\neq \emptyset$. The inductive process
gives the estimates in (a) and (b), and shows that $\Lambda_Q = \bigcup_{j=1}^\infty \Lambda_Q^{(j)}$ 
is  a~union of two separated sequences:
\begin{enumerate}
\item[\rm (i)]
If $\xi_1,\xi_2\in\Lambda_Q$ are distinct points which are situated on hyperbolic segments $\gamma_1$ and $\gamma_2$
such that $\rm{interior}(\Pi(\gamma_1)) \cap \rm{interior}(\Pi(\gamma_2)) \neq \emptyset$, then
$\xi_1$ and~$\xi_2$ are separated by Lemma~\ref{lemma:partition}.

\item[\rm (ii)]
If $\xi_1,\xi_2\in\Lambda_Q$ are distinct points which are situated on hyperbolic segments whose
 radial projections have disjoint interiors, then $\xi_1$ and $\xi_2$
may be pseudo-hyperbolically close. But if this happens, then all points in $\Lambda_Q\setminus \{ \xi_1, \xi_2 \}$ are
separated from $\{ \xi_1, \xi_2 \}$ by Lemmas~\ref{lemma:partition} and \ref{lemma:number}. 
\end{enumerate}
The assertion of Theorem~\ref{thm:geodesics} follows.


\section{An application} \label{sec:application}

Let $f$ be a non-trivial ($f\not\equiv 0$) solution of the linear differential equation
\begin{equation} \label{eq:de2}
f''+Af=0
\end{equation}
with an analytic coefficient function $A$. Let $0<p<\infty$, and suppose that 
$|A(z)|^p (1-|z|^2)^{2p-1} \, dm(z)$ is a Carleson measure.
Here $dm(z)$ is the element of the Lebesgue area measure. Note that the coefficient $A$ satisfies
\begin{equation} \label{eq:h2}
\sup_{z\in\D} \, (1-|z|^2)^2 |A(z)| < \infty
\end{equation}
by the subharmonicity of $|A|^p$.

If $\{z_n\}_{n=1}^\infty$ is the zero-sequence of $f$, then \eqref{eq:h2} and \cite[Theorem~3]{S:1955} imply that
$\{z_n\}_{n=1}^\infty$ is separated. Moreover, it is implicit in the proof of \cite[Theorem~I]{N:1949}
that for each pair of distinct zeros $z_j$ and $z_k$ there exists a point $\xi_{j,k} \in \langle z_j, z_k \rangle \subset\D$ at which
$(1-|\xi_{j,k}|^2)^2 |A(\xi_{j,k})| >1$. Define
\begin{equation*}
\Lambda = \Big\{ \xi_{j,k} \in \langle z_j,z_k \rangle : z_j,z_k\in\{z_n\}_{n=1}^\infty, z_j\neq z_k \Big\}.
\end{equation*}

The property (i) in Theorem~\ref{thm:geodesics} is given by the construction. To see that 
the property (ii) holds, let $\Lambda' = \{ \xi_n' \}_{n=1}^\infty$ be any separated subsequence of $\Lambda$ with
the separation constant $0<\delta<1$. Consequently, there exists a constant $\eta=\eta(\delta)$
with $0<\eta<1$ such that the Euclidean discs $D_n = D(\xi_n',\eta(1-|\xi_n'|))$ are pairwise disjoint, and
$D_n \subset 2Q$ whenever $\xi_n'\in Q$.

The subharmonicity of $|A|^p$ implies that there exists a~constant $C = C(\delta,p)$ 
with $0<C<\infty$ such that
\begin{align*}
\sum_{\xi_n'\in Q} (1-|\xi_n'|) & \leq \sum_{\xi_n'\in Q} (1-|\xi_n'|^2)^{2p+1} |A(\xi_n')|^p\\
& \leq \sum_{\xi_n'\in Q} C  \int_{D_n} |A(z)|^p (1-|z|^2)^{2p-1} \, dm(z)\\
& \leq C \int_{2Q} |A(z)|^p (1-|z|^2)^{2p-1} \, dm(z)
\end{align*}
for all Carleson squares $Q$. Hence $\{z_n\}_{n=1}^\infty$ is uniformly separated.

We have proved the following result, which reduces to \cite[Theorem~1]{GNR:preprint} in
the special case $p=1$.


\begin{corollary} \label{cor:de}
If $A$ is analytic in $\D$, and $|A(z)|^p (1-|z|^2)^{2p-1} \, dm(z)$ is a~Carleson measure
for some $0<p<\infty$, then the zero-sequence of each non-trivial solution $f$ of \eqref{eq:de2}
is uniformly separated.
\end{corollary}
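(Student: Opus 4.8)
The plan is to deduce the corollary from Theorem~\ref{thm:geodesics}, with the required intermediate sequence furnished by the oscillation theory of \eqref{eq:de2}. Let $f$ be a non-trivial solution with zero-sequence $\{z_n\}_{n=1}^\infty$. The first preparatory step is to upgrade the Carleson-measure hypothesis on $|A(z)|^p(1-|z|^2)^{2p-1}\,dm(z)$ to the pointwise growth bound \eqref{eq:h2}. This is available because $|A|^p$ is subharmonic, so its value at a point is dominated by its average over a pseudo-hyperbolic disc, while the Carleson condition controls the corresponding weighted integral by the area scale of that disc. Once \eqref{eq:h2} is secured, the separation of $\{z_n\}_{n=1}^\infty$ follows from the classical Schwarz-type estimate \cite[Theorem~3]{S:1955}, which supplies exactly the standing hypothesis of Theorem~\ref{thm:geodesics} that the sequence be separated.

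The second step is to construct the intermediate sequence $\Lambda$. Here I would invoke the oscillation-theoretic fact, implicit in \cite[Theorem~I]{N:1949}, that between any two distinct zeros $z_j,z_k$ of $f$ there must lie, on the hyperbolic segment $\langle z_j,z_k\rangle$, a point $\xi_{j,k}$ at which $(1-|\xi_{j,k}|^2)^2|A(\xi_{j,k})|>1$. Collecting all such points gives $\Lambda=\{\xi_{j,k}\}$, and property~(i) of Theorem~\ref{thm:geodesics} holds by construction, since $\xi_{j,k}\in\langle z_j,z_k\rangle$ for every admissible pair.

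It remains to verify property~(ii): every separated subsequence $\Lambda'=\{\xi_n'\}$ of $\Lambda$ is uniformly separated. Fixing the separation constant $\delta$ of $\Lambda'$, a suitable $\eta=\eta(\delta)$ makes the Euclidean discs $D_n=D(\xi_n',\eta(1-|\xi_n'|))$ pairwise disjoint with $D_n\subset 2Q$ whenever $\xi_n'\in Q$. The pointwise lower bound at $\xi_n'$ gives $1-|\xi_n'|\leq(1-|\xi_n'|^2)^{2p+1}|A(\xi_n')|^p$, and the sub-mean-value property of $|A|^p$ over $D_n$ converts this into $1-|\xi_n'|\leq C\int_{D_n}|A(z)|^p(1-|z|^2)^{2p-1}\,dm(z)$. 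Summing over the $\xi_n'$ lying in a Carleson square $Q$ and using the disjointness of the discs together with the Carleson hypothesis yields $\sum_{\xi_n'\in Q}(1-|\xi_n'|)\leq C\,\ell(Q)$, which is precisely \eqref{eq:precarleson}; hence $\Lambda'$ is uniformly separated. With properties~(i) and (ii) in place, Theorem~\ref{thm:geodesics} delivers the uniform separation of $\{z_n\}_{n=1}^\infty$.

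The step I expect to be the genuine obstacle is the second one: extracting from the differential equation an intermediate point on the geodesic between two zeros at which the coefficient is quantitatively large. This is the real input from oscillation theory linking the zero distribution to the size of $A$, and it is exactly what permits the otherwise purely geometric Theorem~\ref{thm:geodesics} to be applied here. By contrast, the first and third steps are routine: the pointwise bound \eqref{eq:h2} and the resulting separation are standard consequences of subharmonicity and classical estimates, and the summation argument of the third step is the familiar passage from a pointwise size condition to a Carleson-measure bound via disjoint pseudo-hyperbolic discs.
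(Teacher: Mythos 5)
Your proposal is correct and follows essentially the same path as the paper: the pointwise bound \eqref{eq:h2} and separation via \cite[Theorem~3]{S:1955}, the intermediate points $\xi_{j,k}$ with $(1-|\xi_{j,k}|^2)^2|A(\xi_{j,k})|>1$ from \cite[Theorem~I]{N:1949}, and the verification of property~(ii) through disjoint Euclidean discs, subharmonicity of $|A|^p$, and the Carleson hypothesis leading to \eqref{eq:precarleson}. No gaps; this is the paper's argument.
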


By the well-known connection between solutions of \eqref{eq:de2}
and locally univalent meromorphic functions \cite[p.~546]{N:1949}, Corollary~\ref{cor:de} can be stated in 
the following equivalent form. Recall that, if $w$ is meromorphic and locally univalent, then its Schwarzian derivative
\begin{equation*}
S_w = \left( \frac{w''}{w'} \right)' - \frac{1}{2} \left( \frac{w''}{w'} \right)^2
\end{equation*}
is analytic, and the differential equation \eqref{eq:de2} with $A=S_w/2$ admits two linearly
independent solutions $f_1$ and $f_2$ such that $w=f_1/f_2$. Now, the complex 
 $a$-points of $w$ (i.e., solutions $z\in\D$ of $w(z)=a$) are either zeros of the solution $f_1 - a f_2$
or zeros of $f_2$, depending whether $a\in\C$ or $a=\infty$, respectively.


\begin{corollary} \label{cor:ulu}
If $w$ is meromorphic and locally univalent function in $\D$, and $|S_w(z)|^p (1-|z|^2)^{2p-1} \, dm(z)$ is a~Carleson measure
for some $0<p<\infty$, then the complex $a$-points of $w$ are uniformly separated for
any $a\in\C\cup \{\infty\}$.
\end{corollary}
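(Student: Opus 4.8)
The plan is to reduce Corollary~\ref{cor:ulu} directly to Corollary~\ref{cor:de} by means of the standard correspondence between locally univalent meromorphic functions and second order linear differential equations that is recalled immediately above the statement. First I would set $A=S_w/2$, which is analytic precisely because $w$ is meromorphic and locally univalent. Since multiplying a Carleson measure by the positive constant $2^{-p}$ produces another Carleson measure, the hypothesis that $|S_w(z)|^p(1-|z|^2)^{2p-1}\,dm(z)$ is a Carleson measure is equivalent to the statement that $|A(z)|^p(1-|z|^2)^{2p-1}\,dm(z)$ is a Carleson measure. Thus $A$ satisfies exactly the hypotheses of Corollary~\ref{cor:de}.

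Next I would identify, for each fixed $a\in\C\cup\{\infty\}$, the $a$-points of $w$ with the zero-sequence of an explicit non-trivial solution of $f''+Af=0$. Writing $w=f_1/f_2$ for two linearly independent solutions $f_1,f_2$, the Wronskian $f_1'f_2-f_1f_2'$ is a nonzero constant, so $f_1$ and $f_2$ have no common zeros. For finite $a$, the equation $w(z)=a$ holds exactly when $f_1(z)-af_2(z)=0$: at such a point $f_2(z)\neq 0$, while a zero of $f_2$ would force $w=\infty\neq a$. Hence the $a$-points of $w$ coincide with the zeros of $f_1-af_2$, which is a non-trivial solution of $f''+Af=0$ because $f_1$ and $f_2$ are linearly independent. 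For $a=\infty$, the equation $w(z)=\infty$ holds exactly at the zeros of the non-trivial solution $f_2$.

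With these identifications in place, Corollary~\ref{cor:de} applies to the solution $f_1-af_2$ (when $a\in\C$) or to $f_2$ (when $a=\infty$), yielding that the corresponding zero-sequence, and therefore the set of $a$-points of $w$, is uniformly separated. Since $a\in\C\cup\{\infty\}$ was arbitrary, this completes the argument.

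I do not anticipate a genuine obstacle here: the analytic content of the result is already carried by Corollary~\ref{cor:de}, and the remaining work is the bookkeeping of the ODE correspondence. The only point requiring care is the verification that, for finite $a$, the $a$-points are precisely the zeros of $f_1-af_2$ and not merely a subset of them; this is exactly where the nonvanishing of the Wronskian, equivalently the absence of common zeros of $f_1$ and $f_2$, is used to rule out spurious coincidences at the poles of $w$.
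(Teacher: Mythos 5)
Your proposal is correct and follows essentially the same route as the paper: both reduce Corollary~\ref{cor:ulu} to Corollary~\ref{cor:de} via the classical correspondence $A=S_w/2$, $w=f_1/f_2$, identifying the $a$-points of $w$ with the zeros of the non-trivial solution $f_1-af_2$ (or $f_2$ when $a=\infty$). Your extra verification via the constancy and nonvanishing of the Wronskian that the $a$-points are \emph{exactly} these zeros is a detail the paper leaves implicit, and it is a welcome addition rather than a deviation.
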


We close our discussion with two examples.


\begin{example}
If $w$ is a locally univalent function in $\D$ such that $\log w'$ is in $\rm{BMOA}$, then it is easy to show that its Schwarzian derivative $S_w$
satisfies the assumption in the Corollary~\ref{cor:ulu}. We deduce that the preimage sequence $w^{-1} (a)$ of any point $a\in\C$ 
is uniformly separated. This fact has been proved in \cite[Lemma~10]{GNR:preprint},
and hence Corollary~\ref{cor:ulu} can be understood as a generalization of this result.
\end{example}


\begin{example}
If $w$ is a meromorphic function whose Schwarzian derivative $S_w$ is analytic and univalent in $\D$,
then $w$ satisfies the hypothesis of Corollary~\ref{cor:ulu}. The Carleson measure condition
follows from \cite[Theorem~11]{PR:2008}. Again, we conclude that all complex $a$-points of $w$
are uniformly separated for any $a\in\C\cup \{\infty\}$. 

This example implies that,
if $A$ is analytic and univalent in $\D$, then the zero-sequences of all non-trivial solutions 
of \eqref{eq:de2} are uniformly separated.
For more information on such differential equations we refer to \cite{CP:1972}.
\end{example}


\end{document}